\tikzset{every node/.style={circle,draw,inner sep=2pt}}
\tikzset{every label/.style={rectangle,draw=none}}
\newcommand{\A}{\mathbf{A}}
\newcommand{\vx}{\mathbf{x}}
\newcommand{\vzero}{\mathbf{0}}
\newtheorem{theorem}{Theorem}
\newtheorem{prop}[theorem]{Proposition}
\theoremstyle{definition}
\newtheorem{problem}[theorem]{Problem}
\newtheorem{mydef}[theorem]{Definition}
\newcommand{\graphbox}[5]
{
\begin{tikzpicture}
     [>=latex,scale=#5]
     
     \draw [->,very thick] (#1, 0) -- (#2, 0) node[right] {$x$};
     \draw [->,very thick] (0, #3) -- (0, #4) node[above] {$y$};
     
     \draw[step=1cm,thick,dotted] (#1,#3) grid (#2,#4);
   \end{tikzpicture}
   }
\begin{document}

 
\title{On the error of {\it a priori} sampling: zero forcing sets and propagation time}
 \author{Franklin H. J. Kenter\footnote{United States Naval Academy, Department of Mathematics; Annapolis, MD, USA (kenter@usna.edu)} \and Jephian C.-H. Lin \footnote{University of Victoria, Department of Mathematics and Statistics; Victoria, BC, Canada (chinhunglin@uvic.ca)}}

\maketitle

 
\begin{abstract}
 
Zero forcing is an iterative process on a graph used to bound the maximum nullity. The process begins with select vertices as colored, and the remaining vertices can become colored under a specific color change rule. The goal is to find a minimum set of vertices such that after iteratively applying the rule, all of the vertices become colored (i.e., a minimum zero forcing set). 
Of particular interest is the propagation time of a chosen set which is the number of steps the rule must be applied in order to color all the vertices of a graph. 

We give a purely linear algebraic interpretation of zero forcing: Find a set of vertices $S$ such that for any weighted adjacency matrix $\mathbf{A}$, whenever $\mathbf{Ax} = \mathbf{0}$, the entirety of of $\mathbf{x}$ can be recovered using only $\mathbf{x}_S$, the entries corresponding to $S$. The key here is that $S$ must be chosen before $\mathbf{A}$. In this light, we are able to give a linear algebraic interpretation of the propagation time: Any error in $\mathbf{x}_S$ effects the error of $\mathbf{x}$ exponentially in the propagation time. This error can be quantitatively measured using newly defined zero forcing-related parameters, the error polynomial vector and the variance polynomial vector. In this sense, the quality of two zero forcing sets can objectively be compared even if the sets are the same size and their propagation time is the same. Examples and constructions are given.
\end{abstract}

%
%

\section{Introduction}

Zero forcing is a one-player game introduced to bound the maximum nullity of a graph \cite{zf_aim}. Given a graph, $G = (V, E)$, the player selects a set of vertices $S$ to color blue, then iteratively applies the color change rule: Identify each colored vertex $u$ that has only one uncolored neighbor $v$, then color all such $v$; in which case, we say that $u$ {\it forces} $v$. The goal of the player is to find the smallest set initial set $S$ such that eventually all the vertices become colored, such a set is called a {\it zero forcing set}. The minimum size of all zero forcing sets is the {\it zero forcing number} of $G$, denoted $Z(G)$. The {\it propagation time} of a graph $G$ with zero forcing set $S$, denoted ${pt}(G,S)$ is the number of iterations of the simultaneous application of the color change rule needed in order force the entire graph to be colored; for emphasis, multiple vertices may force at the same time. 

For a graph $G = (V, E)$, let $\mathcal{S}_F(G)$ denote the set of all $|V| \times |V|$ matrices, $\mathbf{A}$, over the field $F$ with $\mathbf{A}_{ij} = 0$ whenever $i \ne j$ and $\{i,j\} \not\in E$, $\mathbf{A}_{ij} \ne 0$ when $\{ij\} \in E$  and the diagonal entries may take any value. The maximum nullity of $G$ over $F$, $M_F(G)$, is the maximum possible dimension of the null space of $\mathbf{A}$ over all $\mathbf{A} \in \mathcal{S}_F(G)$. The following relates the maximum nullity and the zero forcing number.

\begin{prop}[AIM Group (2008) \cite{zf_aim}]
For any graph $G$ and any field $F$, 
\[ M_F(G) \le Z(G). \]
\end{prop}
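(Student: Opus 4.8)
The plan is to show that for any matrix $\mathbf{A} \in \mathcal{S}_F(G)$ with nonzero null space, a zero forcing set $S$ of $G$ must have size at least the nullity of $\mathbf{A}$. The key linear-algebraic fact I would use is the following: if $\mathbf{A}\mathbf{x} = \mathbf{0}$ and $\mathbf{x}$ vanishes on every vertex of a zero forcing set $S$, then $\mathbf{x} = \mathbf{0}$. Granting this, consider the coordinate-projection map that sends a null vector $\mathbf{x}$ to its restriction $\mathbf{x}_S$ on $S$. This map is linear, and the claim above says its kernel is trivial; hence it is injective, so $\dim(\ker \mathbf{A}) \le |S|$. Taking $S$ to be a minimum zero forcing set gives $\operatorname{nullity}(\mathbf{A}) \le Z(G)$, and maximizing over $\mathbf{A} \in \mathcal{S}_F(G)$ yields $M_F(G) \le Z(G)$.

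So the real content is the forcing-implies-recovery lemma: if $\mathbf{x}$ is in the null space of some $\mathbf{A} \in \mathcal{S}_F(G)$ and $\mathbf{x}_S = \mathbf{0}$ for a zero forcing set $S$, then $\mathbf{x}$ is identically zero. First I would set up the induction on the time steps of the forcing process. Initially every vertex of $S$ has $\mathbf{x}$-value zero by hypothesis. For the inductive step, suppose a colored vertex $u$ forces an uncolored neighbor $v$, meaning $v$ is the unique uncolored neighbor of $u$ at that stage. I would read off the $u$-th row of the equation $\mathbf{A}\mathbf{x} = \mathbf{0}$, namely $\sum_j \mathbf{A}_{uj}\mathbf{x}_j = 0$. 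The sum runs over $j = u$ together with the neighbors of $u$; by the inductive hypothesis every such $\mathbf{x}_j$ is already known to be zero except possibly the one corresponding to $v$. Since $\mathbf{A}_{uv} \ne 0$ (as $\{u,v\} \in E$ forces a nonzero off-diagonal entry in $\mathcal{S}_F(G)$), the equation collapses to $\mathbf{A}_{uv}\mathbf{x}_v = 0$, forcing $\mathbf{x}_v = 0$.

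Running this argument through every force in the propagation of $S$, by induction on the number of forcing steps, establishes that $\mathbf{x}_v = 0$ for every vertex $v$ that eventually becomes colored. Because $S$ is a zero forcing set, all vertices become colored, so $\mathbf{x} = \mathbf{0}$, completing the lemma.

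I expect the main obstacle to be stating the inductive step cleanly: one must be careful that ``the unique uncolored neighbor'' refers to the coloring state at the moment $u$ forces, and that all the other neighbors of $u$ — as well as $u$ itself — were colored at strictly earlier steps and therefore already have zero $\mathbf{x}$-value. The essential structural point that makes this work is precisely the \emph{a priori} nature of $S$ emphasized in the introduction: the entry $\mathbf{A}_{uv}$ is guaranteed nonzero by the definition of $\mathcal{S}_F(G)$ regardless of which $\mathbf{A}$ we chose, so the cancellation is never accidental and the recovery of $\mathbf{x}_v$ is unconditional.
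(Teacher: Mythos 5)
Your proof is correct: the inductive lemma (a null vector of any $\mathbf{A}\in\mathcal{S}_F(G)$ that vanishes on a zero forcing set must vanish everywhere, since each force $u\to v$ collapses row $u$ of $\mathbf{A}\mathbf{x}=\mathbf{0}$ to $\mathbf{A}_{uv}\mathbf{x}_v=0$ with $\mathbf{A}_{uv}\neq 0$), combined with injectivity of the coordinate projection $\ker\mathbf{A}\to F^S$, is exactly the standard argument for this bound. The paper itself does not prove this proposition (it is quoted from the AIM paper \cite{zf_aim}), but your argument is precisely the ``backsolving'' mechanism the paper later formalizes as Proposition \ref{linzf2}, implication (1)$\Rightarrow$(3), so your route matches the intended one rather than departing from it.
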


Recently, the study of the propagation time of zero forcing and its variations has grown \cite{butlerthrot, semithrot, powertimenote, hogbenprop, semiprop}. However, these studies appear to be motivated on their own interest without any connection to linear algebra. One interesting aspect of propagation time is the phenomenon of ``throttling'' where the quantity $|S| + pt(G,S)$ is minimized using non-minimal zero forcing sets. That is, near-minimum zero forcing sets can have substantially faster propagation times compared to minimum zero forcing sets \cite{butlerthrot, semithrot}.

The motivation for this study is to establish a concrete linear-algebraic interpretation of propagation time. Our main result, interprets the propagation time as an exponential growth of error in a certain setting. As a result, one can more carefully quantify the trade-offs between smaller zero forcing sets with slow propagation times and larger zero forcing sets with fast propagation times. In particular, while we do not propose a specific metric, we provide the tools to choose a more appropriate metric besides the seemingly arbitrary $|S| + pt(G,S)$ mentioned above. 

Before describing our result, we need to take a different perspective of zero forcing using a linear algebraic approach. Consider the following problem:

\begin{problem}[Posing zero forcing linear algebraically] \label{linzf1}
Given a graph $G$ and any infinite field $F$, find a minimum set of indices $Q$ such that \emph{for any} $\mathbf{A} \in \mathcal{S}_F(G)$, \emph{any} vector $\mathbf{x}$ with $\mathbf{A x}= \mathbf{0}$ can be uniquely determined by knowing $\mathbf{x}_Q$, the entries of $\mathbf{x}$ corresponding to $Q$, and $\mathbf{A}$; for emphasis, the indices $Q$ must be chosen {\it before} knowing $\mathbf{A}$. 
\end{problem}
A more applied view of this problem is the following: Given a schematic of a system (i.e., the sparsity pattern), where should one place the sensors (i.e., $Q$) so that once the details of the system are known later, the entirety of the system can be determined using only the measurements from the sensors? Indeed, the concept of having to choose where to measure before knowing the exact details of the system appears in the ever-growing applications of zero forcing including monitoring power grids \cite{pmu} and measuring quantum systems \cite{zf.quantum2}, and analyzing the controllability of consensus dynamics \cite{topdynamics}.

Of course, using this view, if $S$ is a zero forcing set of $G$, then by taking $Q$ to be $S$, one can use the entries of $\mathbf{x}_Q$ and the zero forcing color change rule allows one to ``backsolve'' uniquely for the entirety of $\mathbf{x}$. However, the converse is not as obvious; in Section \ref{zproof} we will prove the following:

\begin{prop}[Zero forcing reimagined linear algebraically]\label{linzf2} Given a graph $G$ and a field $F$ (except $\mathbb{F}_2$), the following are equivalent:
\begin{enumerate}
\item $S$ is a zero forcing set of $G$.
\item For any matrix $\A\in\mathcal{S}_F(G)$, the columns corresponding to $V (G)\setminus S$ are linearly independent.
\item For any matrix $\A\in\mathcal{S}_F(G)$, whenever $\A\vx = \vzero$, the vector $\vx_S$ is always sufficient to uniquely determine the entirety of $\vx$.
\end{enumerate}
\end{prop}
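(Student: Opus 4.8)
The plan is to prove the three-way equivalence by establishing a cycle of implications, namely $(1)\Rightarrow(3)\Rightarrow(2)\Rightarrow(1)$, since $(1)\Rightarrow(3)$ is essentially the ``backsolving'' observation already noted in the text and $(3)\Leftrightarrow(2)$ is a standard linear-algebra reformulation once the right setup is in place.

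Let me sketch the proof of the equivalences.

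\textbf{Proof plan.}

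For $(1)\Rightarrow(3)$, I would make precise the backsolving argument. Suppose $S$ is a zero forcing set and fix any $\A\in\mathcal{S}_F(G)$ with $\A\vx=\vzero$. Record a chronological list of forces $u_1\to v_1,\,u_2\to v_2,\dots$ obtained from running the color change rule starting from $S$. The equation $\A\vx=\vzero$ says that for each row indexed by $u$, we have $\sum_j \A_{uj}\vx_j=0$; when $u$ forces $v$, every neighbor of $u$ except $v$ is already colored (its $\vx$-value is known), so since $\A_{uv}\neq 0$ we may solve uniquely for $\vx_v$. Processing the forces in order recovers all of $\vx$ from $\vx_S$, which is exactly $(3)$.

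For $(3)\Rightarrow(2)$, I would translate the unique-determination statement into a rank statement. Order the coordinates so that the columns indexed by $V(G)\setminus S$ come first, and write $\A=[\,\bB\mid\bD\,]$ where $\bB$ collects the columns of $V(G)\setminus S$. Given $\A\vx=\vzero$, splitting $\vx$ into its part on $V(G)\setminus S$ and its part on $S$ gives $\bB\,\vx_{V\setminus S}=-\bD\,\vx_S$. Statement $(3)$ says $\vx_{V\setminus S}$ is uniquely determined by $\vx_S$; if two solutions agreed on $S$ but differed on $V\setminus S$, their difference would be a nonzero vector in the kernel of $\bB$. Hence $(3)$ forces $\bB$ to have trivial kernel, i.e. its columns are linearly independent, which is $(2)$.

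The main obstacle, and the only place where the hypothesis $F\neq\mathbb{F}_2$ and the infinitude of $F$ genuinely enter, is $(2)\Rightarrow(1)$. Here I would argue the contrapositive: assuming $S$ is \emph{not} a zero forcing set, I must produce a single matrix $\A\in\mathcal{S}_F(G)$ whose columns on $V(G)\setminus S$ are linearly \emph{dependent}. Run zero forcing from $S$ until it stalls, letting $W\subseteq V(G)$ be the (nonempty) set of still-uncolored vertices; by the stalling condition, every colored vertex has either zero or at least two neighbors in $W$. The goal is to build a nonzero vector $\vx$ supported on $W$ with $\A\vx=\vzero$, which exhibits the dependence among the columns of $V(G)\setminus S\supseteq W$. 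The difficulty is choosing the off-diagonal weights and the diagonal freely so that every row equation is satisfied simultaneously: rows of colored vertices with no $W$-neighbor are automatically zero, rows of colored vertices with $\geq 2$ $W$-neighbors give a homogeneous constraint that can be met by choosing at least two nonzero off-diagonal entries to cancel (this is where $|F|>2$ matters, since over $\mathbb{F}_2$ one cannot always balance such a sum), and rows indexed by $W$ can be satisfied by exploiting the free diagonal entries. I would carry this out by first fixing an arbitrary nonzero assignment of $\vx$ on $W$, then solving the row-by-row constraints for the matrix entries, using a dimension/counting or genericity argument (valid since $F$ is infinite) to guarantee the required nonzero off-diagonal values exist. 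Assembling these choices yields the desired $\A$ and completes the contrapositive, closing the cycle.
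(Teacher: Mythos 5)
Your plan's architecture mirrors the paper's own proof quite closely: the paper also handles the crucial direction by contrapositive, running the color change rule from $S$ until it stalls, observing that every colored vertex then has zero or at least two white neighbors, and constructing a matrix whose columns on the white set are dependent. (The paper's instantiation fixes the all-ones vector as the intended kernel vector: it sets the white-by-white block to be the Laplacian of the induced subgraph and balances each colored row to have zero row sum; your backsolving for $(1)\Rightarrow(3)$ is likewise the paper's $(1)\Rightarrow(2)$ in different clothing.) However, three details in your sketch need repair before it is a proof.

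First, your $(3)\Rightarrow(2)$ step is argued in the wrong direction: showing that two solutions agreeing on $S$ produce a nonzero vector in $\ker\bB$ is precisely the implication $(2)\Rightarrow(3)$, not $(3)\Rightarrow(2)$. For the direction your cycle actually requires, take a nonzero $\by$ with $\bB\by=\vzero$ and extend it by zeros on $S$; this gives a nonzero solution of $\A\vx=\vzero$ that agrees on $S$ with the zero solution, contradicting $(3)$ — an easy fix, but without it the cycle does not close. Second, in the contrapositive of $(2)\Rightarrow(1)$, the phrase ``an arbitrary nonzero assignment of $\vx$ on $W$'' must mean \emph{every} entry nonzero (all-ones is the cleanest choice, and is what the paper uses). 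If $\vx_w=0$ for some white $w$, the construction can fail outright: a colored row whose $W$-neighbors carry exactly one nonzero entry of $\vx$ forces a required edge weight to vanish, and a white row $u$ with $\vx_u=0$ loses the free-diagonal degree of freedom you are counting on. Third, drop the appeal to genericity and the infinitude of $F$: the proposition as stated covers finite fields other than $\mathbb{F}_2$ (e.g.\ $\mathbb{F}_3$), so an argument ``valid since $F$ is infinite'' proves a weaker statement, and it is unnecessary anyway — the explicit balancing you already describe (set $k-2$ of the weights to $1$, choose the next one nonzero so the partial sum is nonzero, which needs only $|F|\ge 3$, and let the last weight be the negative of that sum) handles every colored row, exactly as in the paper.
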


The viewpoint that the zero forcing can be viewed as ``placing sensors'' before one know the details of the system is essential to our interpretation of propagation time. Consider the following variation of Problem \ref{linzf1} now posed with error in measurement:

\begin{problem}[Zero forcing reimagined linear algebraically with error] \label{linzf3}
Let $G$ be a graph and $S$ a zero forcing set of $G$. Suppose $\mathbf{A x}= \mathbf{0}$ with $\A \in \mathcal{S}_{\mathbb{R}}(G)$, and a vector $\mathbf{x}'_S$ is given with absolute error $\|\mathbf{x}'_S - \mathbf{x}_S\|_\infty < \varepsilon$. Using $\mathbf{x}'_S$, can you compute a vector $\hat{\mathbf{x}}$ such that the absolute error  $\|\mathbf{x} -\hat{\mathbf{x}}\|_\infty$ is small and $\hat{\mathbf{x}}_S = \mathbf{x}'_S$.
\end{problem}

%
%
%

Our main result shows that if one chooses the vertices to monitor (i.e. a zero forcing set), then any error in the measurement at those vertices propagates exponentially in the propagation time based on two other parameters, $\Delta$, the maximum degree of $G$, and the multiplicative row-support spread, $\kappa'(\mathbf{A})$, defined as follows.

\begin{mydef}[Multiplicative row-support spread]
For an $m \times n$ matrix $\mathbf{A}$, define
\[ \kappa'(\mathbf{A}) := \max_{i,j,k} \left| \frac{\mathbf{A}_{ij}}{\mathbf{A}_{ik}} \right| \]
where the maximum is taken over entires where both $\mathbf{A}_{ij},\mathbf{A}_{ik}$ are nonzero. As a convention for the zero matrix, $\kappa'(\mathbf{0}_{m \times n})=1$.
\end{mydef}


\begin{theorem}\label{laprop}
Let $G$ be a graph with maximum degree $\Delta$. Let $F$ be $\mathbb{R}$ (or $\mathbb C$). Fix a zero forcing set $S$ with propagation time $\tau$. \textbf{For any} $\mathbf{A} \in \mathcal{S}_F(G)$ if $\mathbf{A} \mathbf{x} =  \mathbf{0}$ and $\| \mathbf{x} -\mathbf{x}'\|_\infty < \varepsilon$, then only using $\mathbf{x}_S'$, the entries of $\mathbf{x}'$ corresponding to $S$, and $\mathbf{A}$,
a vector $\hat{\mathbf{x}}$ can be computed with  
$\| \mathbf{x} -\hat{\mathbf{x}}\|_\infty \le [\kappa'(\mathbf{A}) \Delta]^\tau \varepsilon$ and $\hat{\mathbf{x}}_S = \mathbf{x}'_S$.

\end{theorem}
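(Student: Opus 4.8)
The plan is to reconstruct $\hat{\mathbf{x}}$ by the natural ``backsolve'' that the zero forcing process supplies, and then to track how the initial measurement error at $S$ is amplified as the forcing proceeds round by round. Concretely, I set $\hat{\mathbf{x}}_S := \mathbf{x}'_S$ and follow the forcing order: whenever a colored vertex $u$ forces an uncolored neighbor $v$, I use row $u$ of $\mathbf{A}\mathbf{x}=\mathbf{0}$. Every term of that row other than $\mathbf{A}_{uv}\mathbf{x}_v$ involves a vertex already colored (namely $u$ itself via the diagonal, and the neighbors of $u$ distinct from $v$), and $\mathbf{A}_{uv}\ne 0$ because $\{u,v\}\in E$, so I can define
\[ \hat{\mathbf{x}}_v = -\frac{1}{\mathbf{A}_{uv}}\Bigl( \mathbf{A}_{uu}\hat{\mathbf{x}}_u + \sum_{\substack{j\sim u\\ j\ne v}} \mathbf{A}_{uj}\hat{\mathbf{x}}_j \Bigr). \]
The identical formula holds exactly for the true $\mathbf{x}$, so subtracting yields a clean recursion for the error $e_w := \hat{\mathbf{x}}_w - \mathbf{x}_w$.

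Next I would introduce, for each round $k$, the quantity $B_k := \max\{\,|e_w| : w \text{ colored by the end of round } k\,\}$, noting that $B_0 < \varepsilon$ follows from the hypothesis $\|\mathbf{x}-\mathbf{x}'\|_\infty < \varepsilon$ restricted to $S$. The heart of the argument is the one-step estimate: subtracting the two expressions for $\hat{\mathbf{x}}_v$ and $\mathbf{x}_v$ and applying the triangle inequality gives
\[ |e_v| \le \sum_{\substack{j\,:\,\mathbf{A}_{uj}\ne 0\\ j\ne v}} \left| \frac{\mathbf{A}_{uj}}{\mathbf{A}_{uv}} \right|\,|e_j| \le \kappa'(\mathbf{A}) \sum_{\substack{j\,:\,\mathbf{A}_{uj}\ne 0\\ j\ne v}} |e_j|. \]
Each ratio is at most $\kappa'(\mathbf{A})$ by definition, and the indices $j$ occurring here are precisely $u$ (from the diagonal) together with the neighbors of $u$ other than $v$, i.e.\ at most $\deg(u)\le\Delta$ terms, all of which are colored before $v$ and hence satisfy $|e_j|\le B_k$ when $v$ is forced in round $k+1$. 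This produces $B_{k+1}\le \kappa'(\mathbf{A})\Delta\,B_k$.

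Finally I would iterate this recursion across the $\tau$ forcing rounds to obtain $B_\tau \le [\kappa'(\mathbf{A})\Delta]^\tau B_0 \le [\kappa'(\mathbf{A})\Delta]^\tau\varepsilon$; since all vertices are colored after $\tau$ rounds, $\|\mathbf{x}-\hat{\mathbf{x}}\|_\infty = B_\tau$, and $\hat{\mathbf{x}}_S = \mathbf{x}'_S$ holds by construction. The step I expect to require the most care is the counting in the one-step bound: I must check that a force in row $u$ leaves at most $\Delta$ nonzero coefficients besides $\mathbf{A}_{uv}$ (the diagonal entry $\mathbf{A}_{uu}$, which may be zero, plus the $\deg(u)-1$ neighbors other than $v$), and that the simultaneous nature of a forcing round ensures every such vertex was colored in an earlier round, so that $B_{k+1}$ depends only on $B_k$ and the recursion is genuinely self-contained. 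A minor edge case is verifying $\kappa'(\mathbf{A})\Delta\ge 1$, so that the bound also absorbs the previously colored vertices; this holds because $\kappa'(\mathbf{A})\ge 1$ always and $\Delta\ge 1$ whenever any force occurs (the case $\tau=0$, where $S=V$ and $\hat{\mathbf{x}}=\mathbf{x}'$, being immediate).
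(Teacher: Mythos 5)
Your proposal is correct and is essentially the paper's own argument: the paper defines $\hat{\mathbf{x}}_k$ by the same backsolve formula (written as a sum over the closed neighborhood $N[i]\setminus\{k\}$, which is your diagonal term plus the other neighbors), proves the same one-step estimate bounding each ratio by $\kappa'(\mathbf{A})$ and counting at most $\Delta$ terms, and iterates over the $\tau$ rounds --- phrased there as induction on the propagation time rather than as your forward recursion $B_{k+1}\le \kappa'(\mathbf{A})\Delta\, B_k$, which is just the unrolled version of the same induction. Your explicit checks (that $\kappa'(\mathbf{A})\Delta\ge 1$ absorbs previously colored vertices, and the $\tau=0$ case) correspond to details the paper handles, respectively implicitly and via its base case.
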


It will be helpful to understand what this theorem actually says. Here, and throughout the paper, $\mathbf{x}$ represents the true solution to the equation $\mathbf{Ax} = \mathbf{0}$, $\mathbf{x'}$ represents the measured values $\mathbf{x}$ (with error), and $\hat{\mathbf{x}}$ represents a complete vector you can compute using only selected entries of $\mathbf{x}'$. The theorem says that if you select a set of entries of $\mathbf{x'}$ that correspond to a zero forcing set, not only can you compute $\hat{\mathbf{x}}$, but the error between $\hat{\mathbf{x}}$ and $\mathbf{x}$ is limited as quantified in the theorem. Further,  sampling the same entries of $\mathbf{x'}$ will work regardless of the $\mathbf{A}$ given.



\section{Zero Forcing Imagined Linear Algebraically} \label{zproof}

\subsection{Proof of Proposition \ref{linzf2}}

\begin{proof}
Using the definition of zero forcing, (1) implies (2).

Additionally, (2) and (3) are equivalent by the subspace decomposition: $\mathbb{R}^n = \mathcal{W}_{ker} \bigoplus \mathcal{W}_{coimg}$.
Therefore, it suffices to prove ``not (1) implies not (2)''.

Suppose $S \subset V$ is not a zero forcing set.  Then, we can keep applying the color change rule until there is no more vertices can be colored.  Let $X$ be the set of blue vertices at this point, which is also called the derived set.  Since $S$ is not a zero forcing set, $Y:=V(G)\setminus X$ is not empty. 

For the remainder, given sets of indices $S$ and $T$, we will denote the matrix $\mathbf{A}$ restricted to the columns indicated by $S$ as  $\mathbf{A}_{:,S}$, and similarly, we will let $\mathbf{A}_{S,T}$ denote the matrix $\A$ restricted to the rows corresponding to $S$ and columns corresponding to  $T$. 

We will construct a matrix $\A\in\mathcal{S}(G)$ such that $\mathbf{A}_{:,Y}$ has zero row sum on each row, so the columns of $\mathbf{A}_{:,Y}$ are dependent.  First, we may set $\mathbf{A}_{Y,Y}$ as the Laplacian matrix of the graph induced on $Y$, so each row of $\mathbf{A}_{:,Y}$ that corresponds to $Y$ has zero row sum. 

Since there is no more forcing to do, every vertex in $X$ (the set of blue vertices)  has either no or at least two neighbors in $Y$ (the set of white vertices).  This means each row of $\mathbf{A}_{X,Y}$ either has all entry zero or at least two nonzero entries.  If the row has all entry zero, then it has zero row sum already.  If the row has $k$ nonzero entries, say ${x}_1,\ldots, {x}_k$, then we solve the equation
\[x_1+x_2+\cdots x_k=0, x_i\neq 0~(i=1,\ldots, k)\]
by the following process.  First let $x_1=\cdots=x_{k-2}=1$.  Next pick a nonzero value for $x_{k-1}$ such that 
\[x_1+\cdots +x_{k-1}\neq 0.\]
(This can be done as long as the field has at least two nonzero elements, or equivalently, $F \not \cong \mathbb{F}_2$.)
Finally, pick 
\[ x_k=-(x_1+\cdots +x_{k-1}),\]
which is nonzero.  Doing this process to all the rows that correspond to $X$, then we found a matrix $\mathbf{A}_{:,Y}$ with zero row sums.  And it is not hard to extend $\mathbf{A}_{:,Y}$ to a matrix $\A\in\mathcal{S}(G)$.
\end{proof}

\subsection{A Counterexample for $F=\mathbb{F}_2$}

We present a counterexample to Theorem \ref{linzf2} when $F$ is the field of two elements. Consider $G$ as obtained by removing two disjoint edges from $K_6$, say $\{1, 5\}$ and $\{2, 6\}$.  Thus, the matrix in $\mathcal{S}_{{\mathbb{F}}_2}(G)$ necessarily looks like
\[\A=\begin{bmatrix}
x_1 & 1 & 1 & 1 & 0 & 1 \\
1 & x_2 & 1 & 1 & 1 & 0 \\
1 & 1 & x_3 & 1 & 1 & 1 \\
1 & 1 & 1 & x_4 & 1 & 1 \\
0 & 1 & 1 & 1 & x_5 & 1 \\
1 & 0 & 1 & 1 & 1 & x_6 \\
\end{bmatrix},\]
where $x_1,\ldots ,x_6\in {\mathbb{F}}_2$.  For $\A$, the first three columns are always linearly independent since 
\[A[\{4,5,6\},\{1,2,3\}]=\begin{bmatrix}
1&1&1\\
0&1&1\\
1&0&1
\end{bmatrix}\]
is a nonsingular matrix over $\mathbb{F}_2$. However, $\{4,5,6\}=V(G)\setminus\{1,2,3\}$ is not a zero forcing set.  Indeed, $Z(G)=4$ but not $3$, by brute force or the Minimum Rank Software Library in \textit{Sage} \cite{mr_software} or other software (see sources within \cite{compzf}).

\section{Proof of the Main Result}

\begin{proof}[Proof of Theorem \ref{laprop}]

We proceed by induction on the propagation time, $\tau$. We will let $S_{i}$ denote the set of vertices colored after $i$ iterations of the color change rule with $S = S_0$.

Base case ($\tau=0$): If $\tau = 0$, then $S  = S_0 =  V(G)$, and so one can set $\mathbf{x}' = \hat{\mathbf{x}}$ and the theorem follows.

Induction step: Suppose the theorem holds for $\tau \le t-1 $, we will show it holds for $\tau = t$.

Since $S$ is a zero-forcing set for $G$, it is also a zero forcing set for the graph induced on $S_{\tau-1}$. Hence, by using the induction hypothesis, we can define $\hat{\mathbf{x}}$ on the vector components corresponding to the vertices $i \in S_{\tau-1}$ so that $| \mathbf{x}_i - \hat{\mathbf{x}}_i | \le [\kappa'(\mathbf{A}) \Delta]^{\tau-1} \varepsilon$.

Since $S_{\tau-1}$ is a zero forcing set of $G$ with propagation time of 1, for every $k \in S_\tau - S_{\tau-1}$, there is some $i \in S_{\tau-1}$ that forces $k$. Hence, all the neighbors of $i$, except for $k$ are colored, and in particular, $\hat{\mathbf{x}}_j$ is well-defined (using the induction hypothesis) for all neighbors of $i$,  $j \ne k$. Therefore, we can define
\begin{equation}
 \hat{\mathbf{x}}_{k} := \frac{-1}{\mathbf{A}_{ik}} \sum_{j \in N[i] \setminus \{k\}}\mathbf{A}_{ij} \hat{\mathbf{x}}_j  \label{xhatk}.
\end{equation}

It remains to show that for all such vertices  $k \in S_\tau - S_{\tau-1}$, $| \mathbf{x}_k - \hat{\mathbf{x}}_k | \le [\kappa'(\mathbf{A}) \Delta]^{\tau} \varepsilon.$

For $k \in S_\tau$ that is forced by $i$, using the $i$-th row of $\mathbf{Ax} = \mathbf{0}$, we have:

\begin{eqnarray}
  0 &=& \sum_{{j \in N[i]}}\mathbf{A}_{ij} \mathbf{x}_j \notag \\
  -\mathbf{A}_{ik} \mathbf{x}_k &=& \sum_{{j \in N[i] \setminus \{k\}}}\mathbf{A}_{ij} \mathbf{x}_j \notag \\
 \mathbf{x}_k &=& \frac{-1}{\mathbf{A}_{ik}} \sum_{j \in N[i] \setminus \{k\}} \mathbf{A}_{ij} \mathbf{x}_j  \label{xk}
 \end{eqnarray}
 
By combining (\ref{xhatk}) and (\ref{xk}), we have
 
\begin{eqnarray}
| {\mathbf{x}}_{k} - \hat{\mathbf{x}}_k  | &=&  \left| \frac{-1}{\mathbf{A}_{ik}} \sum_{{j \in N[i] \setminus \{k\}}} \mathbf{A}_{ij} \mathbf{x}_j - \frac{-1}{\mathbf{A}_{ik}}  \sum_{{j \in N[i] \setminus \{k\}}}\mathbf{A}_{ij} \hat{\mathbf{x}}_j  \right| \label{r} \\
 &\le&  \sum_{j \in N[i] \setminus \{k\}} \left|\frac{\mathbf{A}_{ij}}{\mathbf{A}_{ik}}\right| |\mathbf{x}_j-\hat{\mathbf{x}}_j| \label{q} \\
 &\le& \sum_{j \in N[i] \setminus \{k\}} \kappa'(\mathbf{A}) [\kappa'(\mathbf{A}) \Delta]^{\tau-1} \varepsilon  \notag \\
 &\le&  [\kappa'(\mathbf{A}) \Delta]^{\tau} \varepsilon. \notag
\end{eqnarray}
 
Above, the fourth line follows from the definition of $\kappa'(\mathbf{A})$ and the induction hypothesis, and the final line follows from the fact the sum has at most $\Delta$ terms. \end{proof}


\section{Notes on Theorem \ref{laprop}}

\subsection{$\kappa'(G)$ is necessary in the bound.}

The parameter $\kappa'(G)$ is seeming unaesthetic in the context of the minimum rank problem; however, we will show its use is unavoidable, as the error depends on the matrix chosen in $\mathcal{S}(G)$. For example, consider $G = K_n$, the complete graph on $n$ vertices. Let $\mathbf{A}_{1,k} =\mathbf{A}_{k,1} = \delta > 0$ for all $k \ne 1$, $\mathbf{A}_{1,1} = \delta^2$ and $\mathbf{A}_{ij} = 1$ otherwise. If we take $S = V \setminus \{1\}$ to be a zero forcing set, then regardless of which vertex forces vertex $1$, we have

\[ \hat{\mathbf{x}}_1 = \frac{-1}{\delta}  \sum_{k \ne 1} \mathbf{x}'_k \]

Therefore, the error can be given by:

\[ \left| \mathbf{x}_1 - \hat{\mathbf{x}}_1 \right| =  \frac{1}{\delta} \left | \sum_{k \ne 1} \mathbf{x}_k - \mathbf{x}'_k \right |. \]

If the error for each $\mathbf{x}_k$ is the the maximum allowed, say  $\mathbf{x}_k - \mathbf{x}'_k = \varepsilon$, then the total error for $\hat{\mathbf{x}}_1$ is ${ \varepsilon \Delta} / {\delta}$. Since $\delta$ can be arbitrarily small, this demonstrates that the error can be arbitrarily large unless the matrix is otherwise constrained.

\subsection{The exponential bound is necessary}

Consider the path on $n$ vertices labelled $1, \ldots, n$ with $n$ odd. Take $\mathbf{A}_{ij} = 2^{\min(i,j)}$ whenever $i$ and $j$ differ by exactly 1 and $\mathbf{A}_{ij} = 0$ otherwise. Indeed, the nullity of $\A$ is 1 (maximum possible) and a nonzero null vector of $\mathbf{A}$ is given by $\mathbf{v}_i = (-2)^{(n-i)/2}$ for $i$ odd and $\mathbf{v}_i = 0$ for $i$ even. If the zero forcing set is taken to be $S = \{n\}$ (i.e., the end of a path), then any error in $x_n$ will propagate by a factor of 2 at every other step.

\section{Beyond Propagation Time: Polynomial Vectors}

The estimate in Theorem \ref{laprop2} can be quite wasteful. Indeed, it is not necessarily the case that whenever vertex $i$ forces vertex $k$ at time step $t$ that all of the neighbors of $i$ have error $[\Delta \kappa'(G)]^t$. To get a more precise measure of the error produced by a zero forcing set, we introduce the concept of the {\it error polynomial vector}.

We need to define the concept of a forcing chain: Given a zero forcing set of $G$, a {\it forcing chain} is set of directed paths that covers $V(G)$ and $i \to j$ among these paths only if $i$ forces $j$. Note that the starts of all the paths correspond directly with the zero forcing set. In essence, the forcing chain details the exact strategy as to which vertices force which other vertices, if for instance, there are two or more vertices can force a particular vertex.

\begin{mydef}[Error polynomial vector of a forcing chain] 
Let $S$ be a zero forcing set of $G$ with forcing chain $S'$.
We define the {\it error polynomial vector} of $S'$, $\mathbf{q}^{S'}(t)$, as a vector of polynomials, recursively, as follows:

\[ \mathbf{q}^{S'}_k (t) = \left\{ \begin{array}{cc} \displaystyle 1 &\text{ if } k \in S \\
\displaystyle t \sum_{j \in N[i] \setminus \{k\}} \mathbf{q}^{S'}_j(t)   &\text{ if } i \text { forces } k  \end{array} \right. \]
\end{mydef}

Note that $\mathbf{q}^{S'}(t)$ is well-defined, as each vertex $i$ is either in $S$ or is forced by a unique vertex $k$ as determined by the forcing chain $S'$.

\begin{theorem}\label{laprop2}
Let $G$ be a graph, and let $F$ be $\mathbb{R}$ (or $\mathbb C$). Fix a zero forcing set $S$ with forcing chain $S'$ and error polynomial vector $\mathbf{q}^{S'}(t)$. Then, \textbf{for any} $\mathbf{A}\in \mathcal{S}_F(G)$  with multiplicative row-support spread $\kappa' := \kappa'(\mathbf{A})$, if $\mathbf{A} \mathbf{x} =  \mathbf{0}$ and $\| \mathbf{x} -\mathbf{x}'\|_\infty < \varepsilon$, then only using $\mathbf{x}_S'$, the entries of $\mathbf{x}'$ corresponding to $S$, and $\mathbf{A}$,
a vector $\hat{\mathbf{x}}$ can be computed with  
$| \mathbf{x}_i -\hat{\mathbf{x}}_i | \le\mathbf{q}^{S'}_i( \kappa') ~ \varepsilon$ for all $i$ and $\hat{\mathbf{x}}_S = \mathbf{x}'_S$.
\end{theorem}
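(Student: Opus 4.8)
The plan is to mirror the inductive argument used for Theorem \ref{laprop}, but to track the error at each vertex individually rather than through a single uniform worst-case bound. The construction of $\hat{\mathbf{x}}$ is exactly the same: set $\hat{\mathbf{x}}_k = \mathbf{x}'_k$ for $k \in S$, and for each $k \notin S$ let $i$ be the unique vertex that forces $k$ according to the forcing chain $S'$ and define $\hat{\mathbf{x}}_k$ by equation (\ref{xhatk}). The first thing I would establish is that this recursion is well-founded: because the coloring process is acyclic, whenever $i$ forces $k$ every vertex of $N[i]\setminus\{k\}$ has already been colored, hence its $\hat{\mathbf{x}}$-coordinate is defined before we need it. This simultaneously guarantees that $\mathbf{q}^{S'}_k$ is well-defined, since its recursion is indexed by the same forcing relation.

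I would then induct along this forcing order (equivalently, on the propagation time at which a vertex is colored). The base case is immediate: for $k \in S$ we have $|\mathbf{x}_k - \hat{\mathbf{x}}_k| = |\mathbf{x}_k - \mathbf{x}'_k| < \varepsilon$, which matches the target bound $\mathbf{q}^{S'}_k(\kappa')\,\varepsilon$ because $\mathbf{q}^{S'}_k(t) = 1$ on $S$. For the inductive step, suppose $i$ forces $k$ and that $|\mathbf{x}_j - \hat{\mathbf{x}}_j| \le \mathbf{q}^{S'}_j(\kappa')\,\varepsilon$ holds for every $j \in N[i]\setminus\{k\}$. Using the $i$-th row of $\mathbf{A}\mathbf{x} = \mathbf{0}$ to express $\mathbf{x}_k$ as in (\ref{xk}) and subtracting (\ref{xhatk}), the triangle inequality gives the same estimate as line (\ref{q}).

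At this point the argument diverges from Theorem \ref{laprop}: instead of bounding each $|\mathbf{x}_j - \hat{\mathbf{x}}_j|$ by the common quantity $[\kappa'\Delta]^{\tau-1}\varepsilon$, I would substitute the sharper per-vertex bound $\mathbf{q}^{S'}_j(\kappa')\,\varepsilon$ together with the uniform bound $|\mathbf{A}_{ij}/\mathbf{A}_{ik}| \le \kappa'$. This yields
\[
|\mathbf{x}_k - \hat{\mathbf{x}}_k| \le \kappa' \sum_{j \in N[i]\setminus\{k\}} \mathbf{q}^{S'}_j(\kappa')\,\varepsilon,
\]
and the factor multiplying $\varepsilon$ on the right is, by definition, exactly $\mathbf{q}^{S'}_k(\kappa')$. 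Thus the induction closes precisely because the recursion defining $\mathbf{q}^{S'}$ was built to coincide with the recursion governing the propagated error.

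The only real subtlety — and the step I would be most careful with — is the well-foundedness and \emph{scope} of the induction: one must verify that the induction hypothesis covers the \emph{entire} set $N[i]\setminus\{k\}$, not merely the predecessor of $k$ along its directed path in the forcing chain. This is exactly the content of the color change rule: $i$ forces $k$ only when all of $N[i]\setminus\{k\}$ are already colored, so every such neighbor precedes $k$ in the coloring order and the hypothesis is available for each of them. Everything else is a direct, essentially mechanical, adaptation of the computation in the proof of Theorem \ref{laprop}.
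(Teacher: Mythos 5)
Your proposal is correct and takes essentially the same approach as the paper: the identical recursive construction of $\hat{\mathbf{x}}$ and the identical induction along the forcing order, with the uniform bound $[\kappa'\Delta]^{\tau-1}\varepsilon$ of Theorem \ref{laprop} replaced by the per-vertex bound $\mathbf{q}^{S'}_j(\kappa')\,\varepsilon$, so that the error recursion closes exactly against the defining recursion of $\mathbf{q}^{S'}$. The paper's own proof is simply a terser rendition of this, reusing the computation through line (\ref{q}) and substituting the induction hypothesis and the definition of $\mathbf{q}^{S'}(t)$; your explicit attention to well-foundedness and to the hypothesis covering all of $N[i]\setminus\{k\}$ fills in details the paper leaves implicit.
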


\begin{proof}
The proof is very similar to that of Theorem \ref{laprop}. After line (\ref{q}), we have

\begin{eqnarray}
 &\le&  \sum_{j \in N[i] \setminus \{k\}} \left| \frac{\mathbf{A}_{ij}}{\mathbf{A}_{ik}} \right| |\mathbf{x}_j-\hat{\mathbf{x}}_j| \notag \\
 &\le& \sum_{j \in N[i] \setminus \{k\}} \kappa' \mathbf{q}^{S'}_j(\kappa') ~ \varepsilon  \notag \\
 &\le& \mathbf{q}^{S'}_k(\kappa') ~ \varepsilon \notag.
\end{eqnarray}
Above, the second line follows from the corresponding induction hypothesis and final line follows from the definition of $\mathbf{q}^{S'}(t)$  .
\end{proof}

It should be emphasized that for a zero forcing set $S$, the choice of $S'$ indeed matters. For instance, if two vertices can force the vertex $k$, one may have a different value of $\mathbf{q}^{S'}(t) (t)$ than the other. To compare two entries of $\mathbf{q}^{S'}(t)$ (i.e., polynomials in $t$), we say $p(t) \preceq r(t)$, if $p(t) \le r(t)$ for sufficiently large $t$ (i.e., $r$ has greater coefficients for the highest degree(s) until the first non-equal coefficient). This choice may seem arbitrary, and indeed, as in Theorem \ref{laprop2}, the total error is not solely determined by the highest power. However, since  $\kappa'(\mathbf{A})$ is the input for the polynomial vector, necessarily $\kappa'(\mathbf{A}) \ge 1$, and $\kappa'(\mathbf{A})$ can be arbitrarily large, it makes sense to use this ordering based on the higher coefficients.

\begin{mydef}[Error polynomial vector of a zero forcing set]
Given a graph $G$ and a zero forcing set $S$, define the error polynomial vector of $S$
\[ \mathbf{q}^{S}_i (t) = \min_{S',\preceq} \mathbf{q}^{S'}_i(t) \]
where the minimum is taken over all forcing chains $S'$ using the zero forcing set $S$, under the order $\preceq$.
\end{mydef}

For emphasis, we define the error polynomial for a zero forcing set as the entry-wise minimum over all forcing chains. However, we now show that there is a chain that achieves the minimum for all vertices.

\begin{prop} \label{sameq} Given a graph $G$ and a zero forcing set $S$, there is a forcing chain $S'$ of $S$ such that $\mathbf{q}^{S}(t) = \mathbf{q}^{S'}(t)$.
\end{prop}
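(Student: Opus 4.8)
The plan is to exhibit a single optimal forcing chain by a greedy, round-by-round construction, and to prove its optimality by induction on the forcing time. The crucial preliminary observation is that the \emph{timing} of the forcing process is an invariant of $S$: if $S_0 = S \subseteq S_1 \subseteq \cdots$ denote the sets of colored vertices after each round (which, exactly as in the proof of Theorem~\ref{laprop}, are uniquely determined by $S$), set the forcing time $f(v)$ to be the least $r$ with $v \in S_r$. A vertex $i$ is a \emph{valid forcer} of $k$ precisely when $i \in S_{f(k)-1}$, $\{i,k\}\in E$, and $N(i)\setminus\{k\}\subseteq S_{f(k)-1}$ while $k \notin S_{f(k)-1}$; this is a condition on the sets $S_r$ alone and hence does not depend on any choice of forcing chain. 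I would first record the structural fact that each vertex $i$ is a valid forcer of \emph{at most one} vertex: if $k$ is the unique uncolored neighbor of $i$ at round $f(k)-1$, then once $k$ becomes colored $i$ has no uncolored neighbor left and can never qualify again. Consequently any selection of one valid forcer for each forced vertex automatically has in- and out-degree at most one and, since arcs point forward in time, contains no cycle; in particular every such selection covers $V(G)$ and is a legitimate forcing chain. This decouples the choices: the forcer of $k$ may be chosen independently of the forcers of the other vertices.

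Next I would establish that the recursion defining $\mathbf{q}^{S'}$ is monotone with respect to $\preceq$. Since $p \preceq r$ means simply $p(t)\le r(t)$ for all sufficiently large $t$, and every polynomial in sight has nonnegative coefficients, the operations appearing in the recursion---adding polynomials and multiplying by $t$---preserve $\preceq$: if $\mathbf{q}_j \preceq \tilde{\mathbf{q}}_j$ for every $j \in N[i]\setminus\{k\}$, then $t\sum_j \mathbf{q}_j \preceq t\sum_j \tilde{\mathbf{q}}_j$. I would also note that $\preceq$ is a total order on these distinct, nonnegative-coefficient polynomials, because the difference of any two has eventually constant sign; hence the entrywise minimum $\mathbf{q}^S_i = \min_{S'}\mathbf{q}^{S'}_i$, taken over the finite set of forcing chains, is genuinely attained.

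With these two ingredients the main argument is a straightforward induction on $\tau = f(k)$, proving the strengthened statement that for every $\tau$ there is a choice of forcers for all vertices of forcing time at most $\tau$ that simultaneously realizes $\mathbf{q}_v = \mathbf{q}^S_v$ for all such $v$. The base case $\tau=0$ is trivial, as $\mathbf{q}_v \equiv 1$ on $S$ for every chain. For the inductive step, observe that every input $j \in N[i]\setminus\{k\}$ to the polynomial of a vertex $k$ with $f(k)=\tau$ satisfies $f(j) < \tau$, so by the induction hypothesis there is a partial chain simultaneously minimizing all of them. By monotonicity, the minimum of $\mathbf{q}^{S'}_k$ over all chains then equals $\min_{i^\ast} t\big(\mathbf{q}^S_{i^\ast} + \sum_{j \in N(i^\ast)\setminus\{k\}}\mathbf{q}^S_j\big)$, the minimum being over the chain-independent set of valid forcers $i^\ast$ of $k$. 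Because the forcer choices at round $\tau$ are mutually independent and independent of later rounds, I can make the optimal choice for every such $k$ at once and extend the partial chain without disturbing the already-optimal earlier values, completing the induction and producing the desired chain $S'$ with $\mathbf{q}^{S'} = \mathbf{q}^S$.

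The main obstacle---and precisely the step that makes the entrywise minimum simultaneously achievable rather than merely an infimum over mutually incompatible chains---is the independence claim of the first paragraph: that the valid-forcer relation depends only on $S$ and that each vertex forces at most one vertex, so that locally optimal forcer choices can never conflict. Once this is in hand, monotonicity reduces the proposition to a transparent greedy induction.
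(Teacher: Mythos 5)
Your framework is mostly sound --- indeed you treat one point more carefully than the paper does, namely that each vertex is a valid forcer of at most one vertex, so that greedy forcer choices can never conflict --- but there is a genuine gap at the pivot of your induction: the assertion that ``by monotonicity, the minimum of $\mathbf{q}^{S'}_k$ over all chains equals $\min_{i^\ast} t\bigl(\mathbf{q}^S_{i^\ast}+\sum_{j\in N(i^\ast)\setminus\{k\}}\mathbf{q}^S_j\bigr)$, the minimum being over valid forcers $i^\ast$.'' The minimum defining $\mathbf{q}^S_k$ ranges over \emph{all} forcing chains, and a forcing chain is under no obligation to force $k$ at the earliest round (your $f(k)$): it may assign $k$ a forcer that only becomes eligible several rounds later, and such a forcer need not be a valid forcer in your sense at all --- so the word ``precisely'' in your definition is false. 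Concretely, in the graph of Figure~\ref{fig:notsamevar} with $S=\{1,3,5\}$, vertex $7$ has $f(7)=2$ and valid forcers $1$ and $8$, yet $3\to 6$, $5\to 8$, $6\to 2$, $2\to 7$, $7\to 4$ is a legitimate forcing chain in which $7$ is forced by $2$ at round $3$, and $2$ is not even colored at round $f(7)-1$. Your monotonicity lemma compares chains that use the \emph{same} forcer for $k$; it says nothing about these lazy chains. As written, your induction only shows your chain is optimal among chains built from valid forcers, not that it attains the minimum $\mathbf{q}^S$ over all chains.

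The missing idea is exactly the paper's central observation: the degree of $\mathbf{q}^{S'}_k(t)$ equals the round at which $k$ becomes colored under the chain $S'$; every chain process is dominated round-by-round by the maximal simultaneous propagation, so this round is at least $f(k)$; and since all of these polynomials have nonnegative coefficients, any polynomial of degree greater than $f(k)$ is strictly larger under $\preceq$ than any polynomial of degree $f(k)$. This discards every lazy chain in every entry, reduces the minimum over all chains to the minimum over your chain-independent valid forcers, and from there the rest of your argument (independence of choices plus monotonicity) does close the proof.
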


\begin{proof} We construct the forcing chain $S'$ as follows. Whenever a vertex $k$ can be forced, choose the vertex $i$, among all possible vertices that can force $k$, such that \[ \sum_{j \in N[i]\setminus\{k\}} \mathbf{q}^{S'}_j (t) \] is minimized under $\preceq$. Observe that the degree of $\mathbf{q}^{S'}_k(t)$ is the time index at which $k$ becomes colored (or 0 if $k \in S$). Hence, waiting for a subsequent vertex to force $k$ will result is a greater polynomial under the order $\preceq$. Therefore, by taking the minimum choice at the minimum possible time step, the entry of $\mathbf{q}_i^{S'}(t)$ is guaranteed to be minimum.
\end{proof}

In short, Proposition \ref{sameq} says that the ``best'' forcing chain is determined by the initial zero forcing set $S$ as determined in a greedy manner.

Theorems \ref{laprop} and \ref{laprop2} are interested in the worse possible error. In many applications, the input error is not absolute, rather, it is random. As we will discuss going forward, the variance of this error may be different in random setting than in the absolute setting. 

With regard to the probability we will use below, we will let $\mathbb{E}[X]$ denote the expectation of the random variable $X$ and ${\rm Var}[X] := \mathbb{E} (X-\mathbb{E}[X])^2$ to be the variance of $X$.

\begin{mydef}[Multivariable error polynomial vector of a forcing chain] 
Let $S$ be a zero forcing set of $G$ with forcing chain $S'$.
We define the {\it multivariable error polynomial vector} of $S'$, $\mathbf{q}^{S'}$ as a vector of multivariate polynomials (with variables $t, \alpha_1, \ldots, \alpha_{|S|}$), recursively, as follows:

\[ \mathbf{q}^{S'}_i(t; \alpha_1, \ldots, \alpha_{|S|}) = \left\{ \begin{array}{cc} \displaystyle \alpha_i &\text{ if } i\in S \\
\displaystyle t \sum_{j \in N[i] \setminus \{k\}} \mathbf{q}^{S'}_j(t;  \alpha_1, \ldots, \alpha_{|S|})   &\text{ if } i \text { forces } k \end{array} \right. \]
\end{mydef}

\begin{prop} For a graph $G$ with a zero forcing set $S$ and forcing chain $S'$, 
\[ \mathbf{q}^{S'} (t; 1, \ldots, 1) = \mathbf{q}^{S'} (t). \]
\end{prop}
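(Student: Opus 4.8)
The plan is to prove the two recursions coincide entry by entry after the substitution, by induction on the time step at which each vertex becomes colored under the forcing chain $S'$. The crucial observation is that the two definitions share the identical recursive clause---both set the entry of a forced vertex $k$ equal to $t$ times the sum of the entries over $N[i]\setminus\{k\}$, where $i$ is the vertex forcing $k$---and they differ \emph{only} in their base cases: the single-variable polynomial assigns $1$ to each $i \in S$, whereas the multivariable polynomial assigns $\alpha_i$. Since setting $\alpha_1 = \cdots = \alpha_{|S|} = 1$ turns the multivariable base case into exactly the single-variable base case, the two should agree everywhere, and the rest is bookkeeping.

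First I would fix an ordering of the vertices by the step at which they are colored: each $i \in S$ has coloring time $0$, and each forced vertex $k$ has coloring time strictly larger than that of every vertex in $N[i]\setminus\{k\}$, where $i$ forces $k$ in the chain $S'$. This is precisely the well-foundedness guaranteed by the color change rule: $i$ forces $k$ only when $k$ is the unique uncolored neighbor of $i$, so $i$ itself together with all of its other neighbors---that is, all of $N[i]\setminus\{k\}$---are already colored and hence have strictly smaller coloring time. This makes induction on coloring time a valid strategy, since every entry appearing on the right-hand side for $k$ is determined at an earlier stage.

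For the base case I would note that if $i \in S$ then $\mathbf{q}^{S'}_i(t;1,\ldots,1) = \alpha_i\big|_{\alpha=1} = 1 = \mathbf{q}^{S'}_i(t)$. For the inductive step, consider a forced vertex $k$ with forcing vertex $i$. Each $j \in N[i]\setminus\{k\}$ has strictly smaller coloring time, so the induction hypothesis gives $\mathbf{q}^{S'}_j(t;1,\ldots,1) = \mathbf{q}^{S'}_j(t)$. Substituting into the common recursive clause yields
\[ \mathbf{q}^{S'}_k(t;1,\ldots,1) = t \sum_{j \in N[i]\setminus\{k\}} \mathbf{q}^{S'}_j(t;1,\ldots,1) = t \sum_{j \in N[i]\setminus\{k\}} \mathbf{q}^{S'}_j(t) = \mathbf{q}^{S'}_k(t), \]
which closes the induction and establishes the identity for every vertex.

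There is essentially no hard step here; the statement is a direct structural consequence of the two recursions sharing their recursive clause while agreeing on the base case at $\alpha=1$. The only point requiring a moment's care---and the one I would state explicitly to keep the induction rigorous---is the well-foundedness noted above, namely that for a forced vertex $k$ the summands indexed by $N[i]\setminus\{k\}$ are all resolved at strictly earlier coloring times, so that the induction hypothesis genuinely applies to each term in the sum.
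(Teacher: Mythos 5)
Your proof is correct and follows essentially the same approach as the paper, whose entire proof is the one-line remark that the identity follows by setting $\alpha_i = 1$ for all $i$. Your explicit induction on coloring time simply makes rigorous the observation the paper takes as immediate, namely that the two recursions share their recursive clause and agree on the base case after the substitution.
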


The proof follows by setting $\alpha_i = 1$ for all $i$. \hfill \qed

\begin{mydef}[Variance polynomial vector of a forcing chain] 
Let $S$ be a zero forcing set of $G$ with forcing chain $S'$.
We define the {\it  variance polynomial vector} of $S'$, $\mathbf{V}^{S'}(t)$ as a vector of polynomials:

\[ \mathbf{V}^{S'}(t) = \sum_{i \in S} \left[  [{\alpha_i}] \mathbf{q}_i^{S'}(t ;\alpha_1, \ldots, \alpha_{|S|})  \right]^2 \]  
where $[\alpha_i](p(\cdots))$ is the coefficient of $\alpha_i$, which is a polynomial in $t$.

\end{mydef}

\begin{theorem}\label{laprop3}
Let $G$ be a graph. Fix  a zero forcing set $S$ with forcing chain $S'$ and variance polynomial vector $\mathbf{V}^{S'}(t)$. Then, \textbf{for any} $\mathbf{A}\in \mathcal{S}_\mathbb{R}(G)$  with multiplicative row-support spread $\kappa' := \kappa'(A)$, if $\mathbf{A} \mathbf{x} =  \mathbf{0}$ and $\mathbf{x}_i'$ is a random variable with mean ${\mathbf{x}}_i$ and variance $\varepsilon$, independent from all other $i\in S$, by only using $\mathbf{x}_S'$, the entries of $\mathbf{x}'$ corresponding to $S$, 
a vector $\hat{\mathbf{x}}$ can be computed so that the random quantity $\mathbf{x}_i -\hat{\mathbf{x}}_i$ has mean 0 and variance at most  $\mathbf{V}^{S'}_i( \kappa') ~ \varepsilon$.
\end{theorem}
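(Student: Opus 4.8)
The plan is to follow the same inductive scaffold as the proof of Theorem~\ref{laprop2}, but to track how each \emph{individual} input error propagates through the backsolving, rather than a single worst-case magnitude. Define the input errors $\delta_i := \mathbf{x}'_i - \mathbf{x}_i$ for $i \in S$; by hypothesis these are independent, each with mean $0$ and variance $\varepsilon$. Let $e_m := \hat{\mathbf{x}}_m - \mathbf{x}_m$ denote the error in the computed vector. The first step is to observe that the backsolving rule \eqref{xhatk} is \emph{linear} in the already-computed entries: for a vertex $k$ forced by $i$ one has $\hat{\mathbf{x}}_k - \mathbf{x}_k = \sum_{j \in N[i]\setminus\{k\}} w_{kj}(\hat{\mathbf{x}}_j - \mathbf{x}_j)$ with deterministic weights $w_{kj} = -\mathbf{A}_{ij}/\mathbf{A}_{ik}$, exactly as in lines \eqref{r}--\eqref{xk}. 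Unrolling this recursion along the forcing chain $S'$ expresses every $e_m$ as a fixed (matrix-dependent but non-random) linear combination $e_m = \sum_{i\in S}\gamma_{mi}\,\delta_i$ of the input errors.

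Given this decomposition, the mean and variance are immediate. Since the $\gamma_{mi}$ are deterministic and $\mathbb{E}[\delta_i]=0$, we get $\mathbb{E}[e_m]=\sum_i \gamma_{mi}\mathbb{E}[\delta_i]=0$, so $\mathbf{x}_m - \hat{\mathbf{x}}_m$ has mean $0$ as claimed. Because the $\delta_i$ are independent with common variance $\varepsilon$, all covariance cross-terms vanish and $\mathrm{Var}[e_m] = \sum_{i\in S}\gamma_{mi}^2\,\varepsilon$. Thus the theorem reduces to the purely algebraic bound $\sum_{i\in S}\gamma_{mi}^2 \le \mathbf{V}^{S'}_m(\kappa')$.

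To prove this bound I would show, by induction along the forcing chain, that $|\gamma_{mi}| \le c_{mi}(\kappa')$ for every $m$ and every $i\in S$, where $c_{mi}(t) := [\alpha_i]\,\mathbf{q}^{S'}_m(t;\alpha_1,\dots,\alpha_{|S|})$ is exactly the coefficient extracted in the definition of $\mathbf{V}^{S'}$. The base case is $m\in S$, where $\gamma_{mi}$ equals $1$ if $i=m$ and $0$ otherwise, matching $c_{mi}$ precisely. For a forced vertex $k$ (forced by $i$) the error recursion gives $\gamma_{k\ell}=\sum_{j}w_{kj}\gamma_{j\ell}$, while the multivariable polynomial recursion gives $c_{k\ell}(t)=t\sum_{j}c_{j\ell}(t)$ (both sums over $j \in N[i]\setminus\{k\}$); since $|w_{kj}|\le\kappa'$ by the definition of $\kappa'(\mathbf{A})$, and since the $c_{j\ell}(\kappa')$ are nonnegative (being polynomials in $t$ with nonnegative coefficients evaluated at $\kappa'\ge 1$), the triangle inequality yields $|\gamma_{k\ell}|\le\kappa'\sum_j |\gamma_{j\ell}| \le \kappa'\sum_j c_{j\ell}(\kappa') = c_{k\ell}(\kappa')$, closing the induction. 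Squaring term by term (legitimate because both sides are nonnegative) and summing over $i$ gives $\sum_i \gamma_{mi}^2 \le \sum_i c_{mi}(\kappa')^2 = \mathbf{V}^{S'}_m(\kappa')$, which completes the argument.

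The routine part is essentially the induction of Theorem~\ref{laprop2}, just carried out coordinate-by-coordinate in the input index $i$. The main conceptual point — and the step I would be most careful about — is the linearity observation that lets me write $e_m$ as a deterministic linear functional of the independent $\delta_i$: this is what forces the mean to be $0$ and, crucially, what makes the independence hypothesis collapse the variance into a clean sum of squares $\sum_i \gamma_{mi}^2\,\varepsilon$ with no surviving covariance terms. I would also take care that the per-coordinate coefficients $c_{mi}(\kappa')$ are nonnegative, since the final squaring step relies on it; this is guaranteed by $\kappa'\ge 1$ together with the nonnegativity of the recursion's coefficients.
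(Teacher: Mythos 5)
Your proposal is correct and follows essentially the same route as the paper's proof: your coefficients $\gamma_{mi}$ are exactly the paper's $C_{ik}$, your per-coordinate inductive bound $|\gamma_{mi}| \le [\alpha_i]\,\mathbf{q}^{S'}_m(\kappa';\alpha_1,\dots,\alpha_{|S|})$ is precisely the paper's Claim 1, and the final step (mean zero by linearity, variance collapsing to $\sum_i \gamma_{mi}^2\,\varepsilon$ by independence, then squaring the coordinate bounds) matches the paper's conclusion. Your explicit remark that the coefficients of the multivariable polynomial are nonnegative, so the absolute values can be dropped before squaring, is a small tidying of a point the paper leaves implicit.
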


\begin{proof} Without loss of generality, label the vertices so that $S = \{1, \ldots, |S|\}$. For $i \in S$, take $\hat{\mathbf{x}}_i = \mathbf{x'}_i$; for $k \not \in S$, as in the spirit of Theorem \ref{laprop}, take $\hat{\mathbf{x}}_k$ to be defined recursively based upon the vertex $v$ that forces $k$:

\begin{equation}
 \hat{\mathbf{x}}_{k} := \frac{-1}{\mathbf{A}_{vk}} \sum_{j \in N[v] \setminus \{k\}}\mathbf{A}_{vj} \hat{\mathbf{x}}_j, \notag
\end{equation}

and just as in the proof of Theorem \ref{laprop}, we have
\begin{eqnarray} {\mathbf{x}}_{k} - \hat{\mathbf{x}}_k &=& \frac{-1}{\mathbf{A}_{vk}}  \sum_{j \in N[v] \setminus \{k\}} \mathbf{A}_{vj} \mathbf{x}_j - \frac{-1}{\mathbf{A}_{ik}} \sum_{j \in N[v] \setminus \{k\}}\mathbf{A}_{vj} \hat{\mathbf{x}}_j \notag  \\
&=& \frac{-1}{\mathbf{A}_{vk}}  \sum_{j \in N[v] \setminus \{k\}}\mathbf{A}_{vj} (\mathbf{\hat x}_j - \mathbf{x}_j)  \label{comb1} .
 \end{eqnarray}

For $i \in S$, Let $\alpha_i$ denote the random quantity $\mathbf{x}_i -\hat{\mathbf{x}}_i$.


By inductively applying line \ref{comb1} we have that for any $k \not\in S$, the random quantity $\mathbf{x}_k -\hat{\mathbf{x}}_k$ can be expressed as a (non-random) linear combination of the random quantities $\alpha_1, \ldots,  \alpha_{|S|}$.  For the remainder, we will let $\hat{\mathbf{x}}_k - \mathbf{x}_k = \sum_{i \in S} C_{i k} \alpha_i$ be this  linear combination of $\hat{\mathbf{x}}_k - \mathbf{x}_k$ over the $\alpha_i$.

Since, by hypothesis, for each $i \in S$, the random quantity $\hat{\mathbf{x}}_i := \mathbf{x}'_i$ has mean $\mathbf{x}_i$, the random quantity $\alpha_i$ has mean 0. Since for any $k \not\in S$, $\hat{\mathbf{x}}_k- \mathbf{x}_k$  is a linear combination of $\alpha_1, \ldots,  \alpha_{|S|}$, it follows by linearity of expectation that $\hat{\mathbf{x}}_k- \mathbf{x}_k$ must also have mean 0 as well.

Therefore, it remains to show that for all $i$, the variance of $\hat{\mathbf{x}}_k - \mathbf{x}_k$ is bounded by $\mathbf{V}^{S'}_i( \kappa'; ) ~ \varepsilon$.

Claim 1: For any vertex $k$, \[ |C_{i k}| \le | [{\alpha_i}] \mathbf{q}_k^{S'}(\kappa' ;\alpha_1, \ldots, \alpha_{|S|}) |\]
where the right side the coefficient of ${\alpha_i}$ in $\mathbf{q}_k^{S'}(\kappa' ;\alpha_1, \ldots, \alpha_{|S|}).$

\begin{proof}
We proceed on induction on $m, 0 \le m \le \tau = pt(G,S)$, the number of iterations of the color change rule needed to color $k$. (If $k \in S$, $m=0$).

Base case ($m=0$). For $m=0$, necessarily $k \in S$. Therefore, by the definition of $\mathbf{q}_i^{S'}(\kappa' ;\alpha_1, \ldots, \alpha_{|S|})$, $[{\alpha_i}] \mathbf{q}_i^{S'}(\kappa' ;\alpha_1, \ldots, \alpha_{|S|}) = 1$ if $i = k$ and $0$ otherwise. Hence, $C_{i k} = \mathbf{q}_i^{S'}(\kappa' ;\alpha_1, \ldots, \alpha_{|S|})$, and the claim holds.

Induction step ($m=T$). Suppose, the claim holds for $m=T-1$. Then, for $k$ forced by $v$ at iteration $T$, we have

\begin{eqnarray*}
|C_{ik}|  &=&  \left|  [\alpha_i]  \left( \sum_{j \in N[v] \setminus \{k\}} \frac{\mathbf{A}_{vj}}{\mathbf{A}_{vk}} (\mathbf{x}_j-\hat{\mathbf{x}}_j) \right) \right| \\
&=&  \left|  \sum_{j \in N[v] \setminus \{k\}} \frac{\mathbf{A}_{vj}}{\mathbf{A}_{vk}} [\alpha_i] \left(\mathbf{x}_j-\hat{\mathbf{x}}_j \right) \right| \\
&\le&  \left|   \sum_{j \in N[v] \setminus \{k\}} \kappa' C_{ij}  \right| \\
&\le&  \left|   \sum_{j \in N[v] \setminus \{k\}} \kappa' \cdot [{\alpha_i}] \mathbf{q}_j^{S'}(\kappa' ;\alpha_1, \ldots, \alpha_{|S|})  \right| \\
&=&  \left|  [\alpha_i] \left(  \sum_{j \in N[v] \setminus \{k\}} \kappa' \cdot \mathbf{q}_j^{S'}(\kappa' ;\alpha_1, \ldots, \alpha_{|S|}) \right) \right| \\
&=&  \left|    [{\alpha_i}]  \mathbf{q}_k^{S'}(\kappa' ;\alpha_1, \ldots, \alpha_{|S|})  \right| \\
\end{eqnarray*}
where the fourth-to-last line follows from the definition of $\kappa'$, third-to-last line follows from the induction hypothesis, and the last line follows from the definition of $ \mathbf{q}_k^{S'}(\kappa' ;\alpha_1, \ldots, \alpha_{|S|}) $.

\renewcommand\qedsymbol{$\triangle$}
\end{proof}

Recall that for random independent variables $X_1, \ldots X_n$, \[ {{\rm Var}} \left( \sum_{i=1}^n c_i X_i\right) =  \sum_{i=1}^n c_i^2 {\rm Var}(X_i).\] 

Therefore,
\begin{eqnarray*}
{\rm Var}( \hat{\mathbf{x}}_k- \mathbf{x}_k) &=&  {\rm Var} \left(\sum_{i \in S} C_{i k} \alpha_i  \right)\\
&=&  \sum_{i \in S} C_{i k}^2 {\rm Var}(\alpha_i ) \\
&\le&  \sum_{i \in S} \left[  [{\alpha_i}] \mathbf{q}_i^{S'}(t ;\alpha_1, \ldots, \alpha_{|S|})  \right]^2 \varepsilon  \\
&=& \mathbf{V}_i^{S'}(t) \varepsilon
\end{eqnarray*}
where the last line follows from Claim 1 and that $ {\rm Var}( \hat{\mathbf{x}}_i - \mathbf{x}_i) = {\rm Var}( \mathbf{x}'_i) = {\rm Var}( \hat{\mathbf{x}}_i) =  \varepsilon$ as given in the hypothesis of the theorem.

\end{proof}

As with $\mathbf{q}^{S'}(t)$, $\mathbf{V}^{S'}(t)$ is defined for a given forcing chain. In actuality, one can apply all possible forcing chains simultaneously and choose the ``best'' entry. As a result, for a zero forcing set $S$, we can define $\mathbf{V}^S(t)$ similiar to $\mathbf{q}^S(t)$:

\begin{mydef}[Variance polynomial vector for a zero forcing set]
Given a graph $G$ and a zero forcing set $S$, define the variance polynomial vector of $S$
\[ \mathbf{V}^{S}_i = \min_{S',\preceq} \mathbf{V}^{S'}_i \]
where the minimum is taken over all forcing chains $S'$ using the zero forcing set $S$, under the order $\preceq$.
\end{mydef}

We emphasize again that $\mathbf{V}^{S}_i$ is defined entry-wise, and {\it in contrast to} $\mathbf{q}^{S}$, there may not be a single forcing chain, $S'$ that achieves $\mathbf{V}^{S}_i =  \mathbf{V}^{S'}_i$ for all $i$ (See Section \ref{vsnevsprime}).

\section{Examples and Constructions}
\label{sec:exs}

\subsection{An example where different forcing sets achieve optional $\mathbf{q}^S(t)$ and $\mathbf{V}^S(t)$.}

To see how the error polynomial vector and the variance polynomial vector can be used to compare different zero forces sets, in Figure \ref{differzfs} is a graph, $G$, on 9 vertices, ``\{9, 3094\}'' in the {\it Wolfram Database}. This graph has $Z(G) = 3$ and a minimal propagation time of 4. However, Figure \ref{differzfs} illustrates different minimum zero forcing sets with different $\mathbf{q}^S(t)$ and $\mathbf{V}^S(t)$. In the context of error of a particular system, it would be best to consider maximum error or maximum variance over all the vertices in the graph, in which case, we can measure a zero forcing set based upon the maximum entry of the corresponding polynomial vector based on the ordering $\preceq$. The peculiar aspect about $G$ is that the zero forcing sets that achieve the minimum maximum entry {\it are different} under $\mathbf{q}^S(t)$ than for $\mathbf{V}^S(t)$. For instance, in Figure \ref{differzfs}, the first column is the set that achieves the minimum for the maximum entry of $\mathbf{V}^S(t)$ whereas the second column is the set that achieves the minimum for the maximum entry of $\mathbf{q}^S(t)$. In short, which zero forcing set is ``best'' depends upon whether one expects the error to be independent/uncorrelated (in which case use $\mathbf{V}^S(t)$) or absolute/highly-correlated (in which case use $\mathbf{q}^S(t)$). Further, certain sets with the same propagation time may have significantly worse maximum error as seen in the last column.

\begin{figure}[h]
\centering
$\begin{array}{| c | c | c| c |}
\hline
& 
\begin{tikzpicture}
\node[draw=none] (topstrut) at (0,0.2) {};
\node (1) at (0,0) {};
\node[fill=blue] (2) at (1,0) {};
\node (3) at (2,0) {};
\node (4) at (0,-1) {};
\node (5) at (1,-1) {};
\node[fill=blue] (6) at (2,-1) {};
\node (7) at (0,-2) {};
\node (8) at (1,-2) {};
\node[fill=blue] (9) at (2,-2) {};
\draw (1) -- (4);
\draw (2) -- (5) -- (8);
\draw (3) -- (6) -- (9);
\draw (4) -- (5) -- (6);
\draw (7) -- (8) -- (9);
\draw (5) -- (9);
\end{tikzpicture}
& 
\begin{tikzpicture}
\node[draw=none] (topstrut) at (0,0.2) {};
\node[fill=blue] (1) at (0,0) {};
\node (2) at (1,0) {};
\node (3) at (2,0) {};
\node (4) at (0,-1) {};
\node (5) at (1,-1) {};
\node[fill=blue] (6) at (2,-1) {};
\node (7) at (0,-2) {};
\node (8) at (1,-2) {};
\node[fill=blue] (9) at (2,-2) {};
\draw (1) -- (4);
\draw (2) -- (5) -- (8);
\draw (3) -- (6) -- (9);
\draw (4) -- (5) -- (6);
\draw (7) -- (8) -- (9);
\draw (5) -- (9);
\end{tikzpicture}
& 
\begin{tikzpicture}
\node[draw=none] (topstrut) at (0,0.2) {};
\node[fill=blue] (1) at (0,0) {};
\node (2) at (1,0) {};
\node (3) at (2,0) {};
\node[fill=blue] (4) at (0,-1) {};
\node (5) at (1,-1) {};
\node (6) at (2,-1) {};
\node[fill=blue] (7) at (0,-2) {};
\node (8) at (1,-2) {};
\node (9) at (2,-2) {};
\draw (1) -- (4);
\draw (2) -- (5) -- (8);
\draw (3) -- (6) -- (9);
\draw (4) -- (5) -- (6);
\draw (7) -- (8) -- (9);
\draw (5) -- (9);
\end{tikzpicture}
\\ \hline
\max_i \mathbf{q}^S_i (t) & t^4+3 t^3+4 t^2  & t^4 + 2 t^3 +  4 t^2 + 2 t   & 3 t^4+7 t^3+4 t^2+t  \\\hline
\max_i \mathbf{V}^S_i (t) & \thead{ t^8+2 t^7+7 t^6+8 t^5+ 6 t^4} & \thead{   t^8 + 4 t^7 + 8 t^6 + 8 t^5 \\ \quad  6 t^4 + 4 t^3  + 2 t^2} & \thead{ 3 t^8+14 t^7+25 t^6+22 t^5 \\ +10 t^4+2 t^3+t^2}  \\\hline
\end{array}$
\caption{Different zero forcing sets and their maximum entries for $\mathbf{q}^S(t)$ and $\mathbf{V}^S(t)$ under the ordering $\preceq$}
\label{differzfs}
\end{figure} 

\subsection{Family of graphs where different forcing sets achieve optimal $\mathbf{q}^S(t)$ and $\mathbf{V}^S(t)$.}

For $n\geq 7$, let $G$ be the graph obtained by adding a duplicating a leaf of a path on $n-1$ vertices, as illustrated in Figure~\ref{fig:forkedpath}.  Up to symmetry, there are only three minimum zero forcing sets, namely, $S_1=\{1,n\}$, $S_2=\{n-3,n\}$, and $S_3=\{n-1,n\}$.  Both $S_1$ and $S_2$ has the propagation time $n-3$, yet $S_3$ has the propagation time $n-2$, so $S_3$ cannot be an optimal zero forcing set.  For $n=7$, we may compute 
\[\mathbf{q}^{S_1}(t)=
\begin{bmatrix}
   &   &   &   & 1 \\
   &   &   & t &   \\
   &   & t^2 & +t &   \\
   & t^3 & +2t^2 &   &   \\
   &   &   & t &   \\
 t^4 & +2t^3 & +t^2 & +t &   \\
   &   &   &   & 1
\end{bmatrix}\text{ and }
\mathbf{q}^{S_2}(t)=\begin{bmatrix}
  &   & t^4 & +2t^3 & +2t^2 &   &   \\
  &   &   & t^3 & +t^2 & +t &   \\
  &   &   &   & t^2 & +t  &   \\
  &   &   &   &   &   & 1 \\
  &   &   &   &   & t &   \\
  &   &   &   & t^2 & +2t &   \\
  &   &   &   &   &   & 1
\end{bmatrix}.\]
Thus, $S_1$ is optimal in terms of the error polynomial vector.  On the other side,
\[\mathbf{V}^{S_1}(t)=\begin{bmatrix}
  &   &   &   &   &   &   &   & 1 \\
  &   &   &   &   &   & t &   &   \\
  &   &   &   & t^3 & +2t^2 & +t &   &   \\
  &   & t^5 & +4t^4 & +4t^3 &   &   &   &   \\
  &   &   &   &   &   & t &   &   \\
t^7 & +4t^6 & +4t^5 &   & +t^3 & +2t^2 & +t &   &   \\
  &   &   &   &   &   &   &   & 1
\end{bmatrix}\text{ and }\]
\[\mathbf{V}^{S_2}(t)=\begin{bmatrix}
t^7 & +2t^6 & +2t^5 & +4t^4 & +4t^3 &   &   &   &   \\
  &   & t^5 &   & +t^3 & +2t^2 & +t &   &   \\
  &   &   &   & t^4 &   & +t^2 &   &   \\
  &   &   &   &   &   &   &   & 1 \\
  &   &   &   &   &   & t^2 &   &   \\
  &   &   &   & t^4 & +2t^3 & +2t^2 &   &   \\
  &   &   &   &   &   &   &   & 1
\end{bmatrix}.\]
Therefore, $S_2$ is optimal in terms of the variance polynomial vector.  This behavior also happens when $n=8$; inductively, it happens for all $n\geq 7$.  

Here are some intuitive explanations.  In terms of the error polynomials, $S_2$ requires the errors of $n$ and $n-3$ being carried for a long way to vertex $1$, so it is not a good choice comparing to $S_1$.  In terms of the variance polynomials, the error of vertex $1$ for $S_2$ are evenly contributed by $\alpha_{n-3}$ and $\alpha_{n}$, causing a smaller variance.  Thus, it is a better choice than $S_1$. 

\begin{figure}[h]
\begin{center}
\begin{tikzpicture}
\node[label={$1$}] (1) at (0,0) {};
\node[label={$2$}] (2) at (1,0) {};
\node[label={$3$}] (3) at (2,0) {};
\node[label={$n-3$}] (33) at (4,0) {};
\node[label={right:$n-2$}] (22) at (5,0) {};
\node[label={right:$n-1$}] (11) at (6,1) {};
\node[label={right:$n$}] (00) at (6,-1) {};
\draw (1) -- (2) --(3);
\draw (22) -- (33);
\draw (11) -- (22) -- (00);
\draw[dotted] ([xshift=0.5cm]3.east) -- ([xshift=-0.5cm]33.west);
\end{tikzpicture}
\end{center}
\caption{A forked path}
\label{fig:forkedpath}
\end{figure}
  
\subsection{Example where no single forcing chain yields all of the polynomial entries for $\mathbf{V}^S(t)$ } \label{vsnevsprime}

Unlike Propsition~\ref{sameq}, we will show that for the graph and the zero forcing set in Figure~\ref{fig:notsamevar}, its variance polynomial vector of the zero forcing set is not achieved by the variance polynomial vector of any forcing chain.

Let $G$ be the graph shown in Figure~\ref{fig:notsamevar} and $S=\{1,3,5\}$ a minimum zero forcing set of $G$.  Let $\mathbf{V} :=\mathbf{V}^S(t)$ be the variance polynomial vector of $S$.  We will show that $\mathbf{V} (t)\neq \mathbf{V}^{S'}(t)$ for any forcing chain $S'$ of $S$.  Suppose, for the purpose of yielding a contradiction, there is a forcing chain $S'$ with $\mathbf{V}^{S'}(t)=\mathbf{V}(t)$.  Let $\mathbf{q}=\mathbf{q}^{S'}(t;\alpha_1,\alpha_3,\alpha_5)$ be the multivariate error polynomial vector of $S'$.  Since $S=\{1,3,5\}$, we know $\mathbf{q}_1=\alpha_1$, $\mathbf{q}_3=\alpha_3$, and $\mathbf{q}_5=\alpha_5$.  If $5\rightarrow 8$, then $\mathbf{q}_8$ has degree $1$.  If $8$ is not forced by $5$, $\mathbf{q}_8$ will have its degree too high, causing the degree of $\mathbf{V}_8^{S'}$ too high.  Thus, we know $5\rightarrow 8$ and $\mathbf{q}_8=t\alpha_5$.  Similarly, it must be $3\rightarrow 6$ and $\mathbf{q}_6=t\alpha_3$.  For a similar reason, $6\rightarrow 2$ make $\mathbf{q}_2$ have degree $2$ and is optimal.  Thus $\mathbf{q}_2={t\alpha_1}+(t^2+t)\alpha_3$.  Now comes the first and the only fork.  It can be $8\rightarrow 7$ or $1\rightarrow 7$ making $\mathbf{q}_7$ degree $2$.  (Here, $2\rightarrow 7$ is impossible since it gives $\mathbf{q}_7$ degree $3$.)

\textbf{Case 1:} $8\rightarrow 7$ and $7\rightarrow 4$.  In this case we have 
\[\left\{\begin{aligned}\mathbf{q}_7 &= (t^2+t) \alpha_5\\
 \mathbf{q}_4 &= (t^2+t)\alpha_1 + (t^3+t^2)\alpha_3 + (t^3+2t^2)\alpha_5
\end{aligned}\right.,\text{ so }
\left\{
\begin{aligned}
\mathbf{V}_7^{S'} &= t^4 + 2t^3 + t^2 \\
\mathbf{V}_4^{S'} &= 2 t^6 + 6 t^5 + 6 t^4 + 2 t^3 + t^2
\end{aligned}\right..\]

\textbf{Case 2:} $1\rightarrow 7$ and $7\rightarrow 4$.  In this case we have 
\[\left\{\begin{aligned}\mathbf{q}_7 &=  t \alpha_1 + t^2 \alpha_3\\
 \mathbf{q}_4 &= (2t^2+t) \alpha_1 + (2t^3+t^2) \alpha_3 + t^2 \alpha_5
\end{aligned}\right.,\text{ so }
\left\{
\begin{aligned}
\mathbf{V}_7^{S'} &= t^4 + t^2 \\
\mathbf{V}_4^{S'} &= 4 t^6 + 4 t^5 + 6 t^4 + 4 t^3 + t^2 
\end{aligned}\right..\]
 
Thus, Case 1 is better for $\mathbf{V}_4^{S'}$ but Case 2 is better for $\mathbf{V}_7^{S'}$.  Indeed, through this argument we also know 

\[\mathbf{V}=
\begin{bmatrix}
  &  &  &  &  &  & 1 \\
  &  & t^4 &+2t^3 & +2t^2 &  &  \\
  &  &  &  &  &  & 1 \\
 2t^6 &+6t^5 & +6t^4 &+2t^3 &+t^2 &  &  \\
  &  &  &  &  &  & 1 \\
  &  &  &  & t^2 &  &  \\
  &  & t^4 &  &+t^2 &  & \\
  &  &  &  & t^2 &  &  \\
\end{bmatrix}.\]


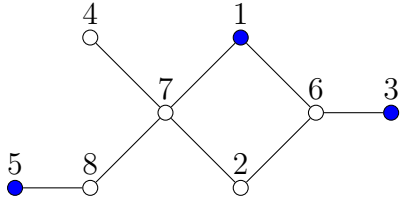
\begin{figure}[h]
\begin{center}
\begin{tikzpicture}
\node [label={$8$}] (8) at (0,0) {};
\node [fill=blue,label={$1$}] (1) at (2,2) {};
\node [label={$2$}] (2) at (2,0) {};
\node [fill=blue,label={$3$}] (3) at (4,1) {};
\node [label={$4$}] (4) at (0,2) {};
\node [fill=blue,label={$5$}] (5) at (-1,0) {};
\node [label={$6$}] (6) at (3,1) {};
\node [label={$7$}] (7) at (1,1) {};
\draw (5) -- (8) -- (7) -- (2) -- (6) -- (1) --(7) -- (4);
\draw (6) -- (3);
\end{tikzpicture}
\end{center}
\caption{A graph and its zero forcing set $S$ such that $\mathbf{V}^S\neq \mathbf{V}^{S'}$ for all forcing chain $S'$ of $S$}
\label{fig:notsamevar}
\end{figure}

\subsection{Explicit calculation of $\mathbf{q}^{\{1\}}(t)$ for $P_n$}

Consider a path $P_n$ on $n$ vertices with the vertices labeled as $1,\ldots ,n$ in order.  Then $S=\{1\}$ is a minimum zero forcing set, and we know $\mathbf{q}_1^S(t)=1$, $\mathbf{q}_2^S(t)=t$, and $\mathbf{q}_k^S(t)=t(\mathbf{q}_{k-1}^S(t)+\mathbf{q}_{k-2}^S(t))$ for $k=3,\ldots, n$.  For example, the error polynomial vector of $P_{15}$ with $S=\{1\}$ is \[\begin{bmatrix}
 &  &  &  &  &  &  &  &  & 1 \\
 &  &  &  &  &  &  &  & t &  \\
 &  &  &  &  &  &  & t^2 & +t &  \\
 &  &  &  &  &  & t^3 & +2t^2 &  &  \\
 &  &  &  &  & t^4 & +3t^3 & +t^2 &  &  \\
 &  &  &  & t^5 & +4t^4 & +3t^3 &  &  &  \\
 &  &  & t^6 & +5t^4 & +6t^3 & +t^2 &  &  &  \\
 &  & t^7 & +6t^6 & +10t^5 & +4t^4 &  &  &  &  \\
 & t^8 & +7t^7 & +15t^6 & +10t^5 & +t^4 &  &  &  &  \\
t^9 & +8t^8 & +21t^7 & +20t^6 & +5t^5 &  &  &  &  & 
\end{bmatrix}.\]
Indeed, if let $T(a,b)$ be the number of subsets of $\{1,...,a\}$ of size $k$ that contain no consecutive integers, then 
\[[t^{k-1-r}]\mathbf{q}_k^S(t)=T(k-2,r).\]
This is because $T(a,b)=\binom{a-b+1}{b}$ and $T(a,b)=T(a-1,b)+T(a-2,b-1)$, which meets the recurrence relation of $\mathbf{q}_k^S$.  For instance, the $21t^7$ comes from $k=10$, $r=2$, and $T(8,2)=\binom{8-2+1}{2}=21$.  This integer sequence is labeled as \textsf{A011973} in OEIS and also related to the Fibonacci polynomials; and more information can be found on \cite{OEIS1}.

\subsection{A graph where $\mathbf{q}^S(t)$ and $\mathbf{V}^S(t)$ have different leading coefficients for optimal graphs}

Let $G$ be the graph in Figure~\ref{distleadcoef} and $S=\{4,5\}$ its minimum zero forcing set.  Then the 
\[\mathbf{q}^S(t)=
\begin{bmatrix}
 4t^2 & +t &  \\
  & 2t &  \\
  & 2t &  \\
  &  & 1 \\
  &  & 1
\end{bmatrix}\text{ and }\mathbf{V}^S(t)=
\begin{bmatrix}
 8t^4 & +4t^4 & +t^3 &  &  \\
  &  & 2t^3 &  &  \\
  &  & 2t^3 &  &  \\
  &  &  &  & 1 \\
  &  &  &  & 1
\end{bmatrix}.
\]
This shows that the leading coefficients of the maximum entries might not be the same.

\begin{figure}[h]
\begin{center}
\begin{tikzpicture}
\node[label={$1$}] (1) at (0,0) {};
\node[label={$2$}] (2) at (1,0) {};
\node[label={$3$}] (3) at (2,1) {};
\node[fill=blue,label={$4$}] (4) at (2,-1) {};
\node[fill=blue,label={$5$}] (5) at (3,0) {};
\draw (1) -- (2) -- (3) -- (5) -- (4) -- (2);
\end{tikzpicture}
\end{center}
\caption{A graph and a zero forcing set with distinct leading coefficients for maximum entries of the error polynomial vector and the variance polynomial vector}
\label{distleadcoef}
\end{figure}
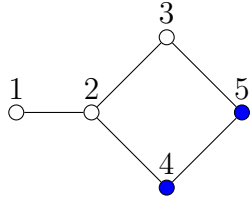

\section{Discussion}

In the introduction, we mentioned that previous work focused on the optimizing $|S| + pt(G,S)$. In particular, it is sometimes possible that the zero forcing set optimizing $|S| + pt(G,S)$ is not a minimum zero forcing set. This phenomenon is known as ``throttling.'' With the new tools of the error polynomial vector and the variance polynomial vector, it would be interesting to revisit throttling. Namely, we ask: Is it possible to substantially decrease the error polynomial vector and/or the variance polynomial vector by adding a few vertices to the zero forcing set, especially in cases were the propagation time may remain unchanged?

The objective $|S| + pt(G,S)$ seems arbitrary. However, with the error polynomial vector and the variance polynomial vector, one may be able to develop a more meaningful objective regarding the trade off between the size of the zero forcing set (i.e., the number of sensors) and the potential error. For instance, if each sensor has a cost and the error of each estimate has a penalty, then one would have a different objective function that would almost certainly be different from $|S| + pt(G,S)$. What are these objective functions, and in what ways do zero forcing sets affect them?

The parameter $\kappa'(\mathbf{A})$ may be interesting to investigate in its own right, especially in the context of the minimum rank problem. In particular, for certain graphs $G$, the matrix attaining the maximum nullity (or minimum rank) might necessarily have $\kappa'(\mathbf{A}) > 1$; therefore, the minimum rank may increase whenever  $\kappa'(\mathbf{A})$ is constrained from above.

%



\end{document}